\documentclass[11pt]{article}
\usepackage{amsmath,amssymb}

\newtheorem{propo}{{\bf Proposition}}[section]
\newtheorem{coro}[propo]{{\bf Corollary}}
\newtheorem{lemma}[propo]{{\bf Lemma}} \newtheorem{theor}[propo]{{\bf
Theorem}} \newtheorem{ex}{{\sc Example}}[section]

\newenvironment{proof}{{\bf Proof.}}{$\Box$}

\def\N{{\mathbb N}}

\begin{document}

\vspace*{1.0in}

\begin{center} LATTICE ISOMORPHISMS OF LEIBNIZ ALGEBRAS
\end{center}
\bigskip

\begin{center} DAVID A. TOWERS 
\end{center}
\bigskip

\begin{center} Department of Mathematics and Statistics

Lancaster University

Lancaster LA1 4YF

England

d.towers@lancaster.ac.uk 
\end{center}
\bigskip

\begin{abstract} Leibniz algebras are a non-anticommutative version of Lie algebras. They play an important role in different areas of mathematics and physics and have attracted much attention over the last thirty years. In this paper we investigate whether conditions such as being a Lie algebra, cyclic, simple, semisimple, solvable, supersolvable or nilpotent in such an algebra are preserved by lattice isomorphisms.
\par 
\noindent {\em Mathematics Subject Classification 2000}: 17B05, 17B20, 17B30, 17B50.
\par
\noindent {\em Key Words and Phrases}: Lie algebras, Leibniz algebras, cyclic, simple, semisimple, solvable, supersolvable, nilpotent, lattice isomorphism. 
\end{abstract}

\section{Introduction}
\medskip

An algebra $L$ over a field $F$ is called a {\em Leibniz algebra} if, for every $x,y,z \in L$, we have
\[  [x,[y,z]]=[[x,y],z]-[[x,z],y]
\]
In other words the right multiplication operator $R_x : L \rightarrow L : y\mapsto [y,x]$ is a derivation of $L$. As a result such algebras are sometimes called {\it right} Leibniz algebras, and there is a corresponding notion of {\it left} Leibniz algebras, which satisfy
\[  [x,[y,z]]=[[x,y],z]+[y,[x,z]].
\]
Clearly the opposite of a right (left) Leibniz algebra is a left (right) Leibniz algebra, so, in most situations, it does not matter which definition we use. Leibniz algebras which satisfy both the right and left identities are sometimes called {\em symmetric} Leibniz algebras.
\par
 
Every Lie algebra is a Leibniz algebra and every Leibniz algebra satisfying $[x,x]=0$ for every element is a Lie algebra. They were introduced in 1965 by Bloh (\cite{bloh}) who called them $D$-algebras, though they attracted more widespread interest, and acquired their current name, through work by Loday and Pirashvili ({\cite{loday1}, \cite{loday2}). They have natural connections to a variety of areas, including algebraic $K$-theory, classical algebraic topology, differential geometry, homological algebra, loop spaces, noncommutative geometry and physics. A number of structural results have been obtained as analogues of corresponding results in Lie algebras.
\par

The {\it Leibniz kernel} is the set $I=$ span$\{x^2:x\in L\}$. Then $I$ is the smallest ideal of $L$ such that $L/I$ is a Lie algebra.
Also $[L,I]=0$.
\par

We define the following series:
\[ L^1=L,L^{k+1}=[L^k,L]  (k\geq 1) \hbox{ and } L^{(0)}=L,L^{(k+1)}=[L^{(k)},L^{(k)}] (k\geq 0).
\]
Then $L$ is {\em nilpotent of class n} (resp. {\em solvable of derived length n}) if $L^{n+1}=0$ but $L^n\neq 0$ (resp.$ L^{(n)}=0$ but $L^{(n-1)}\neq 0$) for some $n \in \N$. It is straightforward to check that $L$ is nilpotent of class n precisely when every product of $n+1$ elements of $L$ is zero, but some product of $n$ elements is non-zero.The {\em nilradical}, $N(L)$, (resp. {\em radical}, $R(L)$) is the largest nilpotent (resp. solvable) ideal of $L$.
\par

The set of subalgebras of a nonassociative algebra forms a lattice under the operations of union, $\cup$, where the union of two subalgebras is the subalgebra generated by their set-theoretic union, and the usual intersection, $\cap$. The relationship between the structure of a Lie algebra $L$ and that
of the lattice ${\cal L}(L)$ of all subalgebras of $L$ has been studied by many authors. Much is known about modular subalgebras
(modular elements in ${\cal L}(L)$) through a number of investigations including \cite{1,2,3,4,5,6}. Other lattice conditions, together with
their duals, have also been studied. These include semimodular, upper semimodular, lower semimodular, upper modular, lower modular
and their respective duals (see \cite{7} for definitions). For a selection of results on these conditions see \cite{14,8,10,kol,11,sch,13,15,9,12}.
\par

The subalgebra lattice of a Leibniz algebra, however, is rather different; in a Lie algebra every element generates a one-dimensional subalgebra, whereas in a Leibniz algebra elements can generate subalgebras of any dimension. So, one could expect different results to hold for Leibniz algebras anf this has been shown to be the case in \cite{st}..
\par

Of particular interest is the extent to which important classes of Leibniz algebras are determined by their subalgebra lattices. In order to investigate this question we introduce the notion of a lattice isomorphism. If we denote the subalgebra lattice of $L$ by ${\mathcal L}(L)$, then a {\em lattice isomorphism} from $L$ to $L^*$ is a bijective map $\theta : {\mathcal L}(L) \rightarrow {\mathcal L}(L^*)$ such that $\theta(A\cup B)=\theta(A)\cup \theta(B)$ and $\theta(A\cap B)=\theta(A)\cap \theta(B)$ for all $A,B\in {\mathcal L}(L)$. If $L$ is a Lie algebra over a field of characteristic zero the following were proved in \cite{latt}.

\begin{theor}\label{t:one}
\begin{itemize}
\item[(i)] If $L$ is simple then either
\begin{itemize}
\item[(a)] $L^*$ is simple, or
\item[(b)] $L$ is three-dimensional non-split simple and $L^*$ is two-dimensional.
\end{itemize}
\item[(ii)] If $L$ is semisimple then either
\begin{itemize}
\item[(a)] $L^*$ is semisimple, or
\item[(b)] $L$ is three-dimensional non-split simple and $L^*$ is two-dimensional.
\end{itemize}
\item[(iii)] If $\dim L,L^*>2$ and $R$ is the radical of $L$, then $R^*$ is the radical of $L^*$.
\item[(iv)] If $L$ is supersolvable of dimension $>2$, then $L^*$ is supersolvable.
\end{itemize}
\end{theor}

In \cite{10} the following was proved.

\begin{theor} If $L$ is a solvable Lie algebra over a perfect field of characteristic different from $2,3$, then either
\begin{itemize}
\item[(i)] $L^*$ is solvable, or
\item[(ii)] $L^*$ is three-dimensional non-split simple.
\end{itemize}
\end{theor}

We say that a Lie algebra $L$ is {\em almost abelian} if it is a split extension $L=L^2\dot{+} Fa$ with ad\,$a$ acting as the identity map on the abelian ideal $L^2$; $L$ is quasi-abelian if it is abelian or almost abelian. The quasi-abelian Lie algebras are precisely the ones in which every subspace is a subalgebra. The following is well-known and easy to show. 

\begin{propo} If $L$ is a quasi-abelian Lie algebra over a field of characteristic zero then $L^*$ is quasi-abelian unless $\dim L=2$ and $L^*$ is three-dimensional non-split simple.
\end{propo}

In this paper we consider corresponding results for Leibniz algebras. First, in section two, we show that cyclic Leibniz algebras are characterised by their subalgebra lattice, and that a non-Lie Leibniz algebra cannot be lattice isomorphic to a Lie algebra. In section three we see that if $L$ is a non-Lie simple or semisimple Leibniz algebra then so is $L^*$. In section four, it is shown that if $L$ is a non-Lie solvable or supersolvable Leibniz algebra then so is $L^*$. It is also proved that the radical of a non-Lie Leibniz algebra is preserved by lattice isomorphisms. The final section is devoted to showing that if $L$ is a non-Lie nilpotent Leibniz algebra then so is $L^*$. Most of the above results are over fields of characteristic zero.
\par

Throughout, $L$ will denote a finite-dimensional Leibniz algebra over a field $F$. Algebra direct sums will be denoted by $\oplus$, whereas vector space direct sums will be denoted by $\dot{+}$. The notation `$A\subseteq B$' will indicate that $A$ is a subset of $B$, whereas `$A\subset B$' will mean that $A$ is a proper subset of $B$. If $A$ and $B$ are subalgebras of $L$ we will write $\langle A,B \rangle$ for $A\cup B$.
\par

The {\it centre} of $L$ is $Z(L)=\{z\in L \mid [z,x]=[x,z]=0$ for all $x\in L\}$. The Frattini ideal of $L$, $\phi(L)$, is the largest ideal of $L$ contained in all maximal subalgebras of $L$
.
\section{Cyclic Leibniz algebras}
The only previous paper that we are aware of on this topic is by Barnes (\cite{barnes2}). The following example shows that the Leibniz kernel of a non-Lie Leibniz algebra is not necessarily preserved by a lattice isomorphism.

\begin{ex}\label{e:ex1}
Let $L=Fb+Fa$ where the only non-zero products are $[b,b]=a$, $[a,b]=a$. Then the only subalgebras of $L$ are ${0}$, $Fa$, $F(b-a)$ and $L$, and $I=Fa$. Then we can define a lattice automorphism of $L$ which interchanges $Fa$ and $F(b-a)$, and the latter is not an ideal of $L$ as $[b,b-a]=a$. 
\end{ex}

Barnes called the above example the {\em diamond} algebra because of the structure of its lattice of subalgebras as a Hasse diagram, but that name has since been used for a different Leibniz algebra. He further showed that this example is exceptional in the following result.

\begin{theor}\label{t:kernel} (\cite[Theorem 3.1]{barnes2}) Let $L, L^*$ be Leibniz algebras with Leibniz kernels $I, I^*$ respectively, and let $\theta : L \rightarrow L^*$ be a lattice isomorphism. Suppose that $\dim L \geq 3$. Then $\theta(I)=I^*$.
\end{theor}

However, this paper does not appear to have been followed by further investigations into the subalgebra structure of a Leibniz algebra. Theorem \ref{t:kernel}, of course, has an immediate corollary. 

\begin{coro}\label{t:lie} Let $L$ be a non-Lie Leibniz algebra. Then $L$ cannot be lattice isomorphic to a Lie algebra $L^*$. 
\end{coro}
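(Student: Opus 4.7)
The plan is to derive this corollary as an almost immediate consequence of Theorem \ref{t:kernel}, with only the low-dimensional cases needing any separate thought. Suppose for contradiction that $\theta : \mathcal{L}(L) \to \mathcal{L}(L^*)$ is a lattice isomorphism with $L$ a non-Lie Leibniz algebra and $L^*$ a Lie algebra. Since $L$ is non-Lie, its Leibniz kernel $I$ is a nonzero subalgebra, while since $L^*$ is Lie, $I^* = \{0\}$.

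In the case $\dim L \geq 3$, Theorem \ref{t:kernel} applies and gives $\theta(I) = I^* = \{0\}$. But every lattice isomorphism carries the unique minimum element of $\mathcal{L}(L)$ to the unique minimum element of $\mathcal{L}(L^*)$, so $\theta(\{0\}) = \{0\}$; together with injectivity this forces $I = \{0\}$, contradicting the assumption that $L$ is non-Lie.

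The remaining cases are $\dim L \leq 2$. Dimension one is immediate, since every one-dimensional algebra is abelian and thus Lie. For $\dim L = 2$ I would invoke the classification of two-dimensional non-Lie Leibniz algebras: up to isomorphism one obtains either a nilpotent algebra whose subalgebra lattice is a chain $\{0\} \subset Fx^{2} \subset L$ of length two, or the algebra of Example \ref{e:ex1} whose lattice is the four-element diamond with exactly two atoms. Neither can be realised as the subalgebra lattice of a Lie algebra $L^*$: a one-dimensional Lie algebra has only a two-element chain, while any Lie algebra of dimension $\geq 2$ over a field $F$ has at least $|F|+1 \geq 3$ one-dimensional subalgebras, i.e.\ at least three atoms in its lattice. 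The main obstacle, if it can be called that, lies solely in covering the $\dim L = 2$ situation, since Theorem \ref{t:kernel} itself does essentially all the structural work.
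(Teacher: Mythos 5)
Your proof is correct and follows essentially the same route as the paper: Theorem \ref{t:kernel} disposes of $\dim L\geq 3$, and the two-dimensional case is settled by listing the two cyclic non-Lie Leibniz algebras and noting that a Lie algebra of dimension at least two has more than two proper nonzero subalgebras (every line is a subalgebra), while dimension one is vacuous since a one-dimensional Leibniz algebra is abelian. Your atom-counting phrasing is just a slightly more explicit version of the paper's counting argument.
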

\begin{proof} If $\dim L\geq 3$ then $I\neq 0$ if and only if $I^*\neq 0$. If $\dim L=2$ there are only two possibilities for $L$, both of them cyclic with basis $x, x^2$. In the first, $[x^2,x]=0$ and the only proper subalgebra is $Fx^2$, and in the second, $[x^2,x]=x^2$ and the only proper subalgebras are $Fx^2$ and $F(x-x^2)$. However, every Lie algebra of dimension greater than one has more than two proper subalgebras. 
\par

There is no non-Lie Leibniz algebra of dimension one.
\end{proof}
\medskip

A Leibniz algebra $L$ is called {\em cyclic} if it is generated by a single element. In this case, $L$ has a basis $x, x^2, \ldots, x^n (n>1)$ and products $[x^i,x]=x^{i+1}$ for $1\leq i\leq n-1$, $[x^n,x]=\alpha_2 x^2 + \ldots + \alpha_n x^n$, all other products being zero. Then we have the following.

\begin{theor}\label{t:cyclic} If $L$ is a cyclic Leibniz algebra over an infinite field $F$, then $L^*$ is also a cyclic Leibniz algebra of the same dimension.
\end{theor}
\begin{proof} Over an infinite field a Leibniz algebra is cyclic if and only if it has finitely many maximal subalgebras, by \cite[Corollary 2.3]{st}. Moreover, the length of a maximal chain of subalgebras of a cyclic algebra is equal to its dimension. 
\end{proof}

\begin{coro}\label{c:cyclic} If $L$ is a nilpotent cyclic Leibniz algebra, then $L^* \cong L$.
\end{coro}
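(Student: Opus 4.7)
The plan is to combine Theorem~\ref{t:cyclic} with a lattice-theoretic characterization of nilpotency among cyclic Leibniz algebras. By Theorem~\ref{t:cyclic}, $L^*$ is cyclic of the same dimension $n$ as $L$. The multiplication of a nilpotent cyclic Leibniz algebra of dimension $n$ is rigid: it is determined by $[x^i,x]=x^{i+1}$ for $1\le i<n$ and $[x^n,x]=0$ (all structure constants $\alpha_2,\dots,\alpha_n$ must vanish), so such an algebra is unique up to isomorphism. Thus it suffices to prove that $L^*$ is nilpotent.

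I would show that a cyclic Leibniz algebra is nilpotent if and only if it has a unique maximal subalgebra. For the ``only if'' direction, in the nilpotent cyclic $L$ with generator $x$ and $I=L^2$, consider any $y\notin I$, so $y=\alpha x+v$ with $\alpha\ne 0$ and $v\in I$. A short induction using $[L,I]=0$ (whence also $[I,I]=0$) gives
\[
y^k = \alpha^k x^k+(\text{linear combination of }x^{k+1},\dots,x^n),\qquad 2\le k\le n,
\]
so $y,y^2,\dots,y^n$ are linearly independent and $\langle y\rangle=L$. Hence every proper subalgebra of $L$ lies inside $I$, making $I$ the unique maximal subalgebra; the lattice isomorphism then forces $L^*$ to have exactly one maximal subalgebra.

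The contrapositive of the ``if'' direction asserts that a cyclic non-nilpotent Leibniz algebra has at least two maximal subalgebras; $I^*=L^{*2}$ is always one, and I must produce a second. Let $I^*=I_0\oplus I_1$ be the Fitting decomposition of $I^*$ under $R_x$ (where $x$ now denotes a generator of $L^*$); both summands are $R_x$-stable, and since $[L^*,I^*]=0$ they are ideals of $L^*$. Non-nilpotency ensures $I_1\ne 0$ with $R_x|_{I_1}$ invertible. Writing $x^2=x_0+x_1$ with $x_i\in I_i$ and setting $v=-R_x^{-1}(x_1)\in I_1$, the element $y=x+v$ has $y^2=x^2+R_x(v)=x_0\in I_0$; the $R_x$-invariance of $I_0$ together with $[I^*,I^*]=0$ then yields $y^k\in I_0$ for all $k\ge 2$. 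Hence $\langle y\rangle\subseteq Fy+I_0$ has dimension at most $1+\dim I_0<n$, so $\langle y\rangle$ is a proper subalgebra containing $y\notin I^*$, which extends to a maximal subalgebra distinct from $I^*$.

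The main obstacle is this final step. In the easy case $\alpha_2^*\ne 0$ one has $R_x$ invertible on all of $I^*$ and can directly find a nil element $y$ with $y^2=0$; the difficulty is that $\alpha_2^*$ can vanish even when later $\alpha_k^*$ do not. Fitting's decomposition is precisely what isolates the part of $I^*$ where $R_x$ is invertible, permitting the construction after restriction to the appropriate summand.
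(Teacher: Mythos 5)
Your proof is correct and follows essentially the same route as the paper: the paper's (much terser) argument also rests on the observation that a nilpotent cyclic Leibniz algebra has a unique maximal subalgebra, namely $I$, so that $L^*$ again has a unique maximal subalgebra and is therefore nilpotent cyclic of the same dimension, hence isomorphic to $L$; what you have added is a full proof of the converse (cyclic with a unique maximal subalgebra $\Rightarrow$ nilpotent) via the Fitting decomposition of $I^*$ under $R_x$, which the paper leaves implicit. One caveat: by starting from Theorem~\ref{t:cyclic} you import its hypothesis that the field be infinite, whereas the corollary carries no such restriction; the paper explicitly notes this restriction is unnecessary, and your own machinery already avoids it --- once $L^*$ is known to have a unique maximal subalgebra $M^*$, any $x^*\in L^*\setminus M^*$ satisfies $\langle x^*\rangle=L^*$ (otherwise $\langle x^*\rangle$ would lie in a maximal subalgebra different from $M^*$), so $L^*$ is cyclic without appealing to Theorem~\ref{t:cyclic}, and its dimension equals the common length of maximal chains in the isomorphic lattices.
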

\begin{proof} A nilpotent cyclic Leibniz algebra has only one maximal subalgebra, namely $I$, its Leibniz kernel. It follows that $L^*$ is nilpotent of the same dimension. Note that the restriction on the field is unnecessary here, since, if $M$ is the only maximal subalgebra of $L$ and $x\in L\setminus M$, we must have $L=\langle x\rangle$.
\end{proof}
\medskip

{\bf Note} that, in both of the above results, if $L=\langle x\rangle$ then $L^*=\langle x^*\rangle$, since $x$ does not belong to any of the maximal subalgebras of $L$, and this is inherited by $x^*$ in $L^*$.

\begin{propo}\label{p:cyclic} Let $L=A\dot{+}Fx$ be a non-Lie Leibniz algebra in which $A$ is a minimal abelian ideal of $L$ and $x^2=0$. Then $L$ is cyclic and $A=I$.
\end{propo}
\begin{proof} Since $L$ is not a Lie algebra, $A=I$, $[L,A]=0$ and $[A,L]\neq 0$, so $[A,x]=A$. Let $0\neq a\in A$. Then $(x+a)^n=R_x^{n-1}(a)$ for $n\geq 2$, which implies that $[(x+a)^n,x]=R_x^n(a)\in \langle x+a\rangle$ for $n\geq 1$. Hence $\langle x+a\rangle\cap A$ is an ideal of $L$ and so equals $A$ or $0$. However, the latter implies that $[a,x]=0$, whence $A=Fa$ and $[A,x]=0$, a contradiction. It follows that $L=\langle x+a\rangle$.
\end{proof}

\section{Semisimple Leibniz algebras}
The following useful result was proved by Barnes in \cite{barnes1}. Note that we have modified the statement to take account of the fact that Barnes' result is stated for left Leibniz algebras and we are dealing with right Leibniz algebras.

\begin{lemma}\label{l:min} Let $A$ be a minimal ideal of the Leibniz algebra $L$. Then $[L,A]=0$ or $[x,a]=-[a,x]$ for all $a\in A$, $x\in L$.
\end{lemma}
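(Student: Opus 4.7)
The plan is to use a case analysis based on whether the minimal ideal $A$ is contained in the Leibniz kernel $I$. First I would observe that $A\cap I$ is an ideal of $L$ (being the intersection of two ideals, $I$ being an ideal by its characterisation as the smallest ideal with Lie quotient). Minimality of $A$ then forces the dichotomy: either $A\subseteq I$, or $A\cap I=0$.

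In the first case, the conclusion $[L,A]=0$ is immediate from the fact, recorded in the introduction, that $[L,I]=0$.

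For the second case, I would aim to prove $[x,a]=-[a,x]$ for all $a\in A$, $x\in L$ by a polarisation argument. The key preliminary observation is that for any $a\in A$, the square $[a,a]$ lies in $I$ by definition of the Leibniz kernel and in $A$ because $A$ is an ideal, so $[a,a]\in A\cap I=0$. Now, for arbitrary $x\in L$, expanding $[a+x,a+x]$ yields
\[ [a+x,a+x] = [a,a] + [a,x] + [x,a] + [x,x] = [a,x]+[x,a]+[x,x]. \]
Both $[a+x,a+x]$ and $[x,x]$ belong to $I$, so $[a,x]+[x,a]\in I$; but this element also lies in $A$ since $A$ is an ideal, hence in $A\cap I=0$, giving the desired antisymmetry.

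I do not anticipate a real obstacle here; the one mild subtlety is recognising that the appropriate dichotomy is phrased through $A\cap I$ rather than by analysing individual brackets, at which point the polarisation trick delivers the antisymmetry essentially for free from the already-established properties of the Leibniz kernel.
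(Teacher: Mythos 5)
Your proof is correct: the dichotomy via the ideal $A\cap I$ together with the polarisation identity $[a,x]+[x,a]=(a+x)^2-a^2-x^2\in I$ and $[a,x],[x,a]\in A$ gives exactly the stated alternative, using $[L,I]=0$ in the case $A\subseteq I$. The paper itself gives no proof here (it quotes the result from Barnes), and your argument is essentially Barnes' original one, so nothing further is needed.
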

\medskip

A Leibniz algebra $L$ is called {\em simple} if its only ideals of $L$ are $0$, $I$ and $L$, and $L^2\neq I$. If $L/I$ is a simple Lie algebra then $L$ is not necessarily a simple Leibniz algebra. It is said to be {\em semisimple} if $R(L)=I$. This definition agrees with that of a semisimple Lie algebra, since, in this case, $I=0$. Semisimple Leibniz algebras are not necessarily direct sums of simple Leibniz algebras (see \cite{dms} or \cite{akoz}).
\par

We have the following version of Levi's Theorem.

\begin{theor}\label{t:levi}(Barnes \cite{barnes}) Let $L$ be a finite-dimensional Leibniz algebra over a field of characteristic $0$. Then there is a semisimple Lie subalgebra $S$ of $L$ such that $L=S\dot{+}R(L)$.
\end{theor}

We shall need the following result which was proved by Gein in \cite[p. 23]{gein}.

\begin{lemma}\label{l:gein} Let $S$ be a three-dimensional non-split simple Lie algebra, and let $R$ be an irreducible $S$-module. Then, for any $s\in S$, $R$ has an ad\,$s$-invariant subspace of dimension less than or equal to two.
\end{lemma}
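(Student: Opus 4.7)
The plan is to extend scalars to the algebraic closure of $F$, use the representation theory of $\mathfrak{sl}_2$, and then descend back to $F$ via the operator $\rho(s)^2$, whose eigenvalues lie in $F$ even when those of $\rho(s)$ do not. I work in characteristic zero, matching the paper's standing setup. Fix $s \in S$. A nonzero $\mathrm{ad}$-nilpotent element of $S$ would, by Jacobson--Morozov, embed in an $\mathfrak{sl}_2$-triple filling the three-dimensional $S$ and force it to split, contradicting the hypothesis; so every element of $S$ is $\mathrm{ad}$-semisimple. Over $\bar F$ we have $S \otimes_F \bar F \cong \mathfrak{sl}_2(\bar F)$, so $s$ has eigenvalues $\pm \lambda \in \bar F$ on the natural two-dimensional representation, and $\mathrm{ad}\,s$ has eigenvalues $\{2\lambda, 0, -2\lambda\}$ on $S$. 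The characteristic polynomial of $\mathrm{ad}\,s$ lies in $F[X]$ and equals $X^3 - 4\lambda^2 X$, giving $\mu := \lambda^2 \in F$.

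Let $\rho$ denote the $S$-action on $R$ and consider the $F$-linear operator $T := \rho(s)^2$. By Weyl's theorem, $R \otimes_F \bar F$ decomposes into irreducible $\mathfrak{sl}_2(\bar F)$-modules $V(n_i)$, on each of which $\rho(s)$ acts semisimply with eigenvalues $(n_i - 2j)\lambda$; hence $T$ is diagonalisable over $\bar F$ with eigenvalues $(n_i - 2j)^2\mu \in F$. An $F$-linear operator that is diagonalisable over $\bar F$ with all eigenvalues in $F$ is already diagonalisable over $F$, so $R = \bigoplus_\nu R_\nu$ decomposes into $T$-eigenspaces, each $\rho(s)$-invariant because $\rho(s)$ commutes with $T$.

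Finally, I would extract the required subspace from any nonzero $R_\nu$. If $\nu = 0$, then $\rho(s)^2 = 0$ on $R_0$ combined with the semisimplicity of $\rho(s)$ (its minimal polynomial, unchanged by scalar extension, is squarefree) forces $\rho(s)|_{R_0} = 0$, so any line in $R_0$ is invariant. If $\nu \neq 0$, pick nonzero $v \in R_\nu$: then $\langle v, \rho(s)v \rangle$ is $\rho(s)$-invariant of dimension at most two, since $\rho(s)(\rho(s)v) = \nu v \in \langle v \rangle$. The main technical obstacle is the descent $\lambda^2 \in F$ together with the resulting diagonalisability of $T$ over $F$; once these are secured, the invariant subspace drops out immediately.
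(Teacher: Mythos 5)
Your argument is correct, and note that the paper itself supplies no proof of this lemma: it is quoted from Gein's article, so there is no internal argument to compare against, and a self-contained proof such as yours is genuinely useful. Your route is sound at every step: non-splitness forces $S$ to be anisotropic (a nonzero ad-nilpotent element would, via Jacobson--Morozov, generate a split $\mathfrak{sl}_2$ filling the three-dimensional $S$), so $s$ is semisimple; the characteristic polynomial of $\mathrm{ad}\,s$, which lies in $F[X]$ and equals $X^3-4\lambda^2X$ over $\bar F$, gives the key descent $\lambda^2\in F$; then $\rho(s)^2$ is diagonalisable over $\bar F$ with eigenvalues $(n_i-2j)^2\lambda^2\in F$, hence diagonalisable over $F$, and inside any nonzero eigenspace the span of $v$ and $\rho(s)v$ is $\rho(s)$-invariant of dimension at most two (the zero eigenvalue being absorbed by the semisimplicity of $\rho(s)$, whose minimal polynomial is unchanged under scalar extension). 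Two small remarks: you implicitly assume $R$ is finite-dimensional and the characteristic is zero, both consistent with the paper's standing hypotheses and with Gein's setting; and you never use irreducibility of $R$, so your argument actually proves the statement for arbitrary finite-dimensional $S$-modules, which is harmless and slightly stronger than what is quoted.
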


If $U$ is a subalgebra of $L$ and $0=U_0 < U_1 < \ldots < U_n=U$ is a maximal chain of subalgebras of $U$ we will say that $U$ has {\em length $n$}. 

\begin{theor}\label{t:simple} Let $L=S\dot{+} A$ be a Leibniz algebra over a field of characteristic zero, where $S$ is a three-dimensional non-split simple Lie algebra and $A$ is a minimal abelian ideal of $L$. Then $L^*$ has a simple Lie subalgebra.
\end{theor}
\begin{proof} Suppose that $L^*$ does not have a simple Lie subalgebra. Then $R(L^*)\neq 0$, by Theorem \ref{t:levi}, and so $L^*$ has a minimal abelian ideal $B^*$. As $S^*$ is a maximal subalgebra of $L^*$ we must have that $L^*=S^*\dot{+}B^*$. If $\dim A=1$ we have that $L=S\oplus A$ is a Lie algebra and hence, so is $L^*$, giving that $S^*\cong S$, by \cite[Lemma 3.3]{latt} and contradicting our supposition. Hence $\dim A\geq 2$.
\par

Now maximal subalgebras of $L$ are of two types: they are isomorphic to $S$, and so have length $2$, or they are of the form $Fs\dot{+} A$, where $s\in S$, and so are solvable of length at least $3$. Moreover, $A$ is the intersection of those of the second type. The same must be true of the maximal subalgebras of $L^*$ and so $B^*=A^*$ and $L^*=S^*\dot{+} A^*$. Also, $\dim S^*=2$, by Theorem \ref{t:one}. Now $\phi(L^*)=(\phi(L))^*=0$, so $L^*=A^*\dot{+}C^*$, where $C^*$ is abelian, by \cite[Corollary 2.9]{frat}. Since $S^*\cong C^*$, we have that $S^*$ is abelian.
\par

Let $0\neq s^*\in S^*$, $0\neq a^*\in A^*$ and let $f(\theta)$ be the polynomial of smallest degree for which $f(R_{s^*})(a^*)=0$. It follows from the fact that $S^*$ is abelian that $\{x^*\in A^* : f(R_{s^*})(x^*)=0\}$ is an ideal of $L^*$, and hence that it coincides with $A^*$. Clearly then $f(\theta)$ is the minimum polynomial of $R_{s^*}|_{A^*}$.
\par
Suppose that there is an $s_1^*\in S^*$ for which the minimum polynomial for $R_{s_1^*}$ has degree two. and let this polynomial be $f(\theta)=\theta^2-\lambda_2\theta-\lambda_1$. Pick $s_2^*\in S^*$ linearly independent of $s_1^*$. Then 
\begin{align*}
[[a^*,s_1^*],s_1^*] & =\lambda_1a^*+\lambda_2[a^*,s_1^*] \hbox{ and } \\
[[a^*,s_2^*],s_2^*] & = \alpha_1a^*+\alpha_2[a^*,s_2^*] \hbox{ so }\\
[[a^*,s_1^*],s_2^*] & = [a^*,[s_1^*,s_2^*]]+[[a^*,s_2^*],s_1^*] = [[a^*,s_2^*],s_1^*] \\
 & = \beta_1a^*+\beta_2[a^*,s_1^*]+\beta_3[a^*,s_2^*],
\end{align*} since $[[a^*,s_1^*+s_2^*],s_1^*+s_2^*] \in Fa^*+F[a^*,s_1^*+s_2^*]$. Now
\[
[[[a^*,s_2^*],s_1^*],s_1^*]=\lambda_1[a^*,s_2^*]+\lambda[[a^*,s_2^*],s_1^*], 
\] so 
\begin{align*}
(\beta_2\lambda_1+\beta_3\beta_1)a^*+(\beta_1+\beta_2\lambda_2)[a^*,s_1^*]+\beta_3^2[a^*,s_2^*] \\
=\lambda_2\beta_1a^*+\lambda_2\beta_2[a^*,s_1^*]+(\lambda_1+\lambda_2\beta_3)[a^*,s_2^*].
\end{align*}
Since $f(\theta)$ is irreducible, $\beta_3^2\neq \lambda_1+\lambda_2\beta_3$ and so $[a^*,s_2^*]=\gamma_1a^*+\gamma_2[a^*,s_1^*]$. Hence $A^*$ is two dimensional.
\par

Put $A=Fa+F[a,s]$. Choose $s_1, s_2$ to be elements of $S$ such that $s, s_1, s_2$ are linearly independent. Then $[a,s_1]= \alpha a+\beta [a,s]$ and $[a,s_2]=\gamma a+\delta [a,s]$ for some $\alpha, \beta, \gamma, \delta \in F$. Thus $[a,s_1-\beta s]=\alpha a$ and $[a,s_2-\delta s]=\gamma a$. But $s_1-\beta s$ and $s_2-\delta s$ are linearly independent, so $$[a,S]=[a,<s_1-\beta s, s_2-\delta s>]\subseteq Fa$$ and $A$ is one dimensional, a contradiction.
\end{proof}

\begin{coro}\label{c:ss} Let $L$ be a non-Lie semisimple Leibniz algebra over a field of characteristic zero. Then $L^*$ is a non-Lie semisimple Leibniz algebra.
\end{coro}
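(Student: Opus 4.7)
The plan is to handle the two assertions separately. The non-Lie conclusion is immediate: since $L$ is non-Lie, Corollary \ref{t:lie} forbids $L^*$ from being a Lie algebra.

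For semisimplicity, I would begin with Levi's theorem (Theorem \ref{t:levi}) to write $L = S \dot{+} I$, where $S$ is a semisimple Lie subalgebra of dimension at least $3$. This guarantees $\dim L \geq 4$, so Theorem \ref{t:kernel} gives $\theta(I) = I^*$. Since $\theta$ preserves unions and intersections and sends $I$ to $I^*$, it restricts to a bijection between subalgebras of $L$ containing $I$ and subalgebras of $L^*$ containing $I^*$, and hence induces a lattice isomorphism between the Lie algebras $L/I \cong S$ and $L^*/I^*$. Applying Theorem \ref{t:one}(ii) to this induced isomorphism, either (a) $L^*/I^*$ is semisimple Lie, or (b) $L/I$ is three-dimensional non-split simple while $L^*/I^*$ is two-dimensional.

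In case (a) the conclusion is quick: $R(L^*/I^*) = 0$ forces $R(L^*) \subseteq I^*$, while $I^*$ is itself abelian (since $[L^*,I^*]=0$ gives $[I^*,I^*]=0$) and hence solvable, so $R(L^*) = I^*$ and $L^*$ is semisimple.

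The obstacle is case (b), which I would rule out via Theorem \ref{t:simple}. Since $S$ is three-dimensional non-split simple over a field of characteristic zero, Weyl complete reducibility lets me pick an irreducible $S$-submodule $A$ of the $S$-module $I$; then $L' := S \dot{+} A$ is a subalgebra of $L$ satisfying the hypotheses of Theorem \ref{t:simple}, because $A$ is abelian (it lies in the abelian ideal $I$) and minimal as an ideal of $L'$ (ideals of $L'$ inside $A$ are exactly $S$-submodules of $A$, and $A$ is $S$-irreducible). Restricting $\theta$ to $\mathcal{L}(L')$ and invoking Theorem \ref{t:simple} produces a simple Lie subalgebra inside $\theta(L') \subseteq L^*$. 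However, under case (b), $L^*/I^*$ is two-dimensional Lie and therefore solvable, and $I^*$ is abelian, so $L^*$ itself is solvable and admits no simple Lie subalgebra---a contradiction. Thus only case (a) can occur, completing the proof.
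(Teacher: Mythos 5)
Your proposal is correct and follows essentially the same route as the paper: Levi's theorem to write $L=S\dot{+}I$, Theorem \ref{t:one}(ii) applied to the induced lattice isomorphism between $L/I$ and $L^*/I^*$, and elimination of the exceptional case by applying Theorem \ref{t:simple} to $S\dot{+}A$ with $A$ a minimal ideal inside $I$ (your irreducible $S$-submodule is the same object, and Weyl's theorem is not even needed to find it). The extra details you supply (the dimension check for Theorem \ref{t:kernel}, the non-Lie conclusion via Corollary \ref{t:lie}, and the identification $R(L^*)=I^*$ in the good case) are points the paper leaves implicit.
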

\begin{proof} We have that $I\neq 0$, so $L=I\dot{+} S$ where $S$ is a semisimple Lie algebra, by Theorem \ref{t:levi}. Then $L^*/I^*$ is a semisimple Lie algebra or $\dim L^*/I^* =2$ and $S$ is $3$-dimensional non-split simple, by Theorem \ref{t:one}(ii). 
\par

Suppose that the latter holds, so $L^*$ is solvable. Let $A$ be a minimal ideal of $L$ inside $I$ and put $B=A\dot{+} S$. Then $B^*$ has a simple Lie subalgebra, by Theorem \ref{t:simple} and $L^*$ cannot be solvable. Hence the former holds and  $L^*$ is a non-Lie semsimple Leibniz algebra.
\end{proof}
\medskip

A subalgebra $U$ of $L$ is called {\em upper semi-modular} if $U$ is a maximal subalgebra of $\langle U,B\rangle$ for every subalgebra $B$ of $L$ such that $U\cap B$ is maximal in $B$. Using this concept we have a further corollary.

\begin{coro}  Let $L$ be a non-Lie simple Leibniz algebra over a field of characteristic zero. Then $L^*$ is a non-Lie simple Leibniz algebra.
\end{coro}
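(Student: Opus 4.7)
The plan is to leverage Corollary \ref{c:ss} and Corollary \ref{t:lie} to reduce the problem to showing that $I^*$ is irreducible as a module over a Levi subalgebra of $L^*$, and then to handle this via a case analysis on $\theta(S)$, where $S$ is a Levi complement of $I$ in $L$.

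First, I would observe that every simple non-Lie Leibniz algebra $L$ is semisimple, since its unique proper non-zero ideal $I$ coincides with the radical. Hence by Corollary \ref{c:ss} the image $L^*$ is a non-Lie semisimple Leibniz algebra, and Theorem \ref{t:levi} provides a Levi decomposition $L^* = S^* \dot{+} I^*$ with $S^*$ semisimple Lie. Combining Theorem \ref{t:kernel} with the lattice isomorphism restricted to $[I,L] \cong [I^*,L^*]$, the quotients $L/I$ and $L^*/I^*$ are lattice isomorphic; since $L/I$ is simple Lie, Theorem \ref{t:one}(i) forces $L^*/I^*$ (and hence $S^*$) to be simple Lie, the $2$-dimensional exceptional case being ruled out by the semisimplicity of $S^*$. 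As $S^*$ is simple of dimension at least $3$ and in particular non-abelian, $L^{*2} \neq I^*$, so it suffices to prove that $I^*$ is an irreducible $S^*$-module.

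To this end I would examine $\theta(S) \subseteq L^*$, which is a Lie subalgebra by Corollary \ref{t:lie} and is lattice isomorphic to $S$. Theorem \ref{t:one}(i) applied to $S$ gives two cases. In Case (A), $\theta(S)$ is simple Lie of dimension $\dim S = \dim L^*/I^*$, and hence is a Levi complement to $I^*$ in $L^*$. The subalgebras of $L^*$ containing $\theta(S)$ then correspond to $\theta(S)$-submodules of $I^*$, and by the lattice isomorphism the number of such subalgebras equals $|[S,L]|$, which is $2$ because $I$ is $S$-irreducible (as $L$ is simple). Consequently $I^*$ has no proper non-zero $\theta(S)$-submodule, hence no proper non-zero $S^*$-submodule, and $L^*$ is simple.

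The main obstacle is Case (B): $S$ is three-dimensional non-split simple and $\theta(S)$ is two-dimensional. Here $S^* \cong L^*/I^*$ is also three-dimensional non-split simple (the lattice of such an algebra contains no two-dimensional subalgebras), so the image of $\theta(S)$ under the projection $L^* \to S^*$ has dimension at most $1$; forcing $\dim(\theta(S) \cap I^*) \in \{1,2\}$. Using that $\dim I^* = \dim I$, since a lattice isomorphism on subspace lattices preserves maximal chain length, I would now dispose of each possibility. If $\theta(S) \subseteq I^*$, then either $\theta(S) \subsetneq I^* \subsetneq L^*$ (when $\dim I^* > 2$), or $\theta(S) = I^*$ (when $\dim I^* = 2$) and the interval $[I^*,L^*]$ is isomorphic to the subalgebra lattice of $S^*$ which has at least three elements; either way $|[\theta(S),L^*]| \geq 3$, contradicting $|[S,L]| = 2$. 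If $\dim(\theta(S) \cap I^*) = 1$ with $\dim I^* \geq 2$, then $\theta(S) \subsetneq \theta(S) + I^* \subsetneq L^*$ again yields $|[\theta(S),L^*]| \geq 3$, a contradiction. Finally, if $\dim(\theta(S) \cap I^*) = 1$ and $\dim I^* = 1$, then $\dim I = 1$, and the $S$-action on $I$ factors through a character of the simple algebra $S = [S,S]$ and so is trivial. But then $L = S \oplus I$ with $S$ a proper non-zero ideal of $L$ distinct from $I$, contradicting the simplicity of $L$. Hence Case (B) cannot occur, and $L^*$ is simple.
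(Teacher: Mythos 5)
Your proposal is essentially correct, but it takes a genuinely different route from the paper's. The paper fixes the decomposition $L=I\dot{+}S$, disposes of the exceptional two-dimensional quotient exactly as in Corollary \ref{c:ss}, and then works with an arbitrary non-zero ideal $A^*$ of $L^*$: when $A^*\subseteq I^*$ it pulls $A=\theta^{-1}(A^*)$ back into $L$ and uses the fact that ideals are upper semi-modular (a lattice-invariant notion) to show, by forming $\langle A,s\rangle$ with one-dimensional subalgebras $Fs$ of $S$, that $A$ is an ideal of $L$, forcing $A=I$ and $A^*=I^*$; ideals not contained in $I^*$ are then shown to be all of $L^*$, the case $A^*\cap I^*=0$ being killed because all squares of elements of $L^*$ would then lie in $A^*$, giving $I^*=0$. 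You transfer in the opposite direction: you push the Levi subalgebra $S$ forward and identify the interval $[\theta(S),L^*]$ with the lattice of $\theta(S)$-submodules of $I^*$, so that $|[S,L]|=2$ (irreducibility of $I$ as an $S$-module, which follows from simplicity since $[L,I]=0$) yields irreducibility of $I^*$. This avoids the semimodularity argument entirely and replaces it by a transparent module-counting step, at the cost of having to control how $\theta(S)$ sits inside $L^*$.

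Three places need shoring up. First, the assertion that $\theta(S)$ has dimension $\dim S=\dim L^*/I^*$ is not justified by the results you quote (dimension is not a lattice invariant, as case (b) of Theorem \ref{t:one}(i) itself shows); but you do not need it: since $\theta$ preserves meets and joins, $\theta(S)\cap I^*=\theta(S\cap I)=0$ and $\theta(S)\cup I^*=\theta(L)=L^*$, and the join of a subalgebra with the ideal $I^*$ is the sum, so $L^*=\theta(S)\dot{+}I^*$ automatically. The same observation makes your Case (B) collapse at once: $\theta(S)\cap I^*=0$ while you have shown its image in $L^*/I^*$ is at most one-dimensional, which is incompatible with $\dim\theta(S)=2$; your sub-case analysis is correct but superfluous. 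Second, the claim that it ``suffices to prove that $I^*$ is an irreducible $S^*$-module'' is asserted rather than proved: you must still rule out an ideal $A^*$ with $A^*\cap I^*=0$ whose image is the whole simple quotient, and that requires the span-of-squares argument ($[A^*,I^*]$ and $[I^*,A^*]$ lie in $A^*\cap I^*=0$, so every square lies in $A^*$, whence $I^*\subseteq A^*$, contradicting $I^*\neq 0$), which is exactly the paper's third paragraph. Third, passing from $\theta(S)$-irreducibility to $S^*$-irreducibility deserves a word: simplest is to take $S^*:=\theta(S)$ (legitimate once the complement property above is established), or to note directly that any ideal of $L^*$ inside $I^*$ is a $\theta(S)$-submodule. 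With these patches the argument goes through.
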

\begin{proof} We have that $L=I\dot{+} S$ where $S$ is a simple Lie subalgebra of $L$ and $I\neq 0$. If $L^*/I^*$ is not simple then $S$ must be three-dimensional non split simple, by Theorem \ref{t:one}(i), and we get a contradiction as in the previous corollary.
\par

Let $0\neq A^*$ be an ideal of $L^*$. Suppose first that $A^*\subseteq I^*$. Then $A$ is an upper semi-modular subalgebra of $L$ with $A\subseteq I$. Let $s\in S$. Then $A\cap Fs=0$ is a maximal subalgebra of $Fs$. Hence $A$ is a maximal subalgebra of $C=\langle A,s \rangle$. Now $A\subseteq C\cap I\subset C$, so $A=C\cap I$. Thus $[s,A], [A,s]\subseteq C\cap I=A$, so $A$ is an ideal of $L$, whence $A=I$. It follows that $A^*=I^*$.
\par

Next, suppose that $A^*\not \subseteq I^*$. Then $I^*+A^*=L^*$ and $I^*\cap A^*=I^*$ or $0$, by the previous paragraph. The former implies that $A^*=L^*$; the latter gives that $L^*=I^*\oplus A^*$ giving $I^*=0$ and $L^*=A^*$ again. 
\par

Clearly $(L^*)^2\neq I^*$, so $L^*$ is a non-Lie simple Leibniz algebra.
\end{proof}

\section{Solvable and supersolvable Leibniz algebras}
\begin{propo}\label{p:solv} Let $L$ be a non-Lie solvable Leibniz algebra over a field of characteristic zero. Then $L^*$ is a non-Lie solvable Leibniz algebra.
\end{propo}
\begin{proof} Let $L$ be a minimal counter-example. Then $L^*$ has a semisimple Lie subalgebra $S^*$, and so $S (\neq L)$ must be two dimensional and $S^*$ must be three-dimensional non-split simple. Moreover, $L^*=S^*\dot{+} A^*$, where $A^*$ is a minimal ideal of $L^*$, since, otherwise, this is a smaller counter-example. But then $L$ has a simple subalgebra, by Theorem \ref{t:simple}, a contradiction.
\end{proof}

\begin{lemma}\label{l:int} Let $L$ be a Leibniz algebra over a field of characteristic zero. Then the radical, $R$, of $L$ is the intersection of the maximal solvable subalgebras of $L$.
\end{lemma}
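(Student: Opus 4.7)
The plan is to prove the equality by showing both inclusions separately. Write $J$ for the intersection of all maximal solvable subalgebras of $L$; the goal is to show $R = J$.

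First I would establish the inclusion $R \subseteq J$. Fix any maximal solvable subalgebra $M$ of $L$. Because $R$ is an ideal, $M + R$ is a subalgebra of $L$; moreover $(M+R)/R \cong M/(M \cap R)$ is solvable as a homomorphic image of $M$, and $R$ itself is solvable, so an extension-of-solvables-is-solvable argument shows that $M + R$ is solvable. By the maximality of $M$ we conclude $M + R = M$, i.e., $R \subseteq M$. Intersecting over all maximal solvable $M$ gives $R \subseteq J$.

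For the reverse inclusion I would invoke Levi's theorem (Theorem~\ref{t:levi}) to write $L = S \dot{+} R$ with $S$ a semisimple Lie subalgebra, so that the quotient $L/R$ is (isomorphic as a Leibniz algebra to) the semisimple Lie algebra $S$. Since by the previous step every maximal solvable subalgebra of $L$ contains $R$, the correspondence theorem supplies a bijection $M \leftrightarrow M/R$ between maximal solvable subalgebras of $L$ and those of $L/R \cong S$: both directions preserve solvability (again because an extension of solvables is solvable) and proper inclusion. Consequently $J/R$ coincides with the intersection of all maximal solvable subalgebras of the semisimple Lie algebra $S$.

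The statement therefore reduces to the classical Lie-theoretic fact that in a semisimple Lie algebra over a field of characteristic zero, the intersection of all maximal solvable subalgebras is zero. I expect this step to be the main obstacle. Over an algebraically closed field it is immediate, since the maximal solvable subalgebras are precisely the Borel subalgebras and a non-zero element lying in all of them would, by conjugacy of the Borels, lie in the (trivial) centre of $S$. For a general field of characteristic zero one has to descend: pass to $\bar{S} = S \otimes_F \bar{F}$, observe that every maximal solvable subalgebra of $S$ sits inside a maximal solvable (hence Borel) subalgebra of $\bar{S}$, and use Galois invariance to deduce that an element of $S$ lying in every maximal solvable subalgebra of $S$ must lie in every Borel subalgebra of $\bar{S}$ and hence is zero. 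Once this Lie-theoretic fact is in hand, $J/R = 0$, so $J \subseteq R$, finishing the proof.
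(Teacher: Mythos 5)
Your first inclusion ($R\subseteq J$) and the reduction, via Levi's theorem and the correspondence theorem, to the statement ``the intersection of the maximal solvable subalgebras of a semisimple Lie algebra over a field of characteristic zero is zero'' are fine. The genuine gap is exactly where you predicted it: the descent argument for a non-algebraically-closed field does not work as stated. Knowing that each maximal solvable subalgebra $M$ of $S$ satisfies $M\otimes_F\bar{F}\subseteq\mathfrak{b}$ for \emph{some} Borel subalgebra $\mathfrak{b}$ of $\bar{S}$ gives containments pointing the wrong way: it places your element inside subalgebras that are contained in certain Borels, and says nothing about an arbitrary Borel of $\bar{S}$. Galois invariance only yields that the element lies in every \emph{Galois-stable} maximal solvable subalgebra of $\bar{S}$, and such subalgebras may fail to exist: for the compact real form $S=\mathfrak{su}(2)$ no Borel subalgebra of $\bar{S}\cong\mathfrak{sl}_2(\C)$ is stable under complex conjugation, since $S$ has no two-dimensional subalgebra (its maximal solvable subalgebras are the lines). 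So the step ``an element in every maximal solvable subalgebra of $S$ lies in every Borel of $\bar{S}$'' is unproved, and the strategy is vacuous precisely for anisotropic forms.

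The fact you need is true, but the paper's mechanism is different and avoids extension of scalars altogether: the intersection $\Gamma$ of the maximal solvable subalgebras of $L$ is invariant under every automorphism of $L$ (automorphisms permute the maximal solvable subalgebras), hence, in characteristic zero, invariant under every derivation by \cite[Corollary 3.2]{liefrat}; since the right multiplications $R_x$ are derivations, $\Gamma$ is a right ideal, and the identity $[x,y]+[y,x]\in I\subseteq\Gamma$ upgrades this to a two-sided ideal; being solvable (it lies inside a maximal solvable subalgebra) and an ideal, it is contained in $R$. Note that this argument is carried out directly in the Leibniz algebra, so no Levi decomposition or passage to $L/R$ is needed. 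If you wish to keep your reduction to the semisimple Lie algebra $S$, the same automorphism-invariance argument applied to $S$ (where derivation-invariance immediately makes the intersection an ideal, hence zero) closes your remaining gap, whereas the Borel-conjugacy/Galois route would need substantially more work to repair.
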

\begin{proof} Let $\Gamma$ be the intersection of the maximal solvable subalgebras of $L$. Then $R\subseteq \Gamma$. Furthermore, $\Gamma$ is invariant under all automorphisms of $L$, and hence is invariant under all derivations of $L$, by \cite[Corollary 3.2]{liefrat}. It follows that $\Gamma$ is a right ideal of $L$. But
$[x,y]+[y,x]\in I\subseteq \Gamma$ for all $x\in L$, $y\in \Gamma$, so $\Gamma$ is an ideal of $L$, whence $\Gamma\subseteq R$.
\end{proof}
\medskip

Then we have the following corollaries to Proposition \ref{p:solv}.

\begin{coro} Let $L$ be a non-Lie Leibniz algebra over a field of characteristic zero, and let $R$ be the radical of $L$. Then $R^*$ is the radical of $L^*$.
\end{coro}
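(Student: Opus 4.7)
The plan is to combine Lemma \ref{l:int} with Proposition \ref{p:solv}. Since $\theta$ preserves intersections, and $L^*$ is also non-Lie (by Theorem \ref{t:kernel}, $I^* = \theta(I) \neq 0$, so Lemma \ref{l:int} applies on both sides), it suffices to show that $\theta$ sets up a bijection between the maximal solvable subalgebras of $L$ and those of $L^*$; then
\[
\theta(R) \;=\; \bigcap_M \theta(M) \;=\; \bigcap_{M^*} M^* \;=\; R(L^*).
\]

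The standard observation is that any maximal solvable subalgebra $M$ of $L$ contains $R$ (since $M+R$ is a solvable subalgebra), and hence contains $I \neq 0$. Restricting $\theta$ to $\mathcal{L}(M) \to \mathcal{L}(\theta(M))$, Proposition \ref{p:solv} immediately gives that $\theta(M)$ is non-Lie solvable whenever $M$ itself is non-Lie. If instead $M$ is a Lie algebra, I would appeal to the theorem of \cite{10} quoted in the introduction, which tells us that $\theta(M)$ is either solvable or three-dimensional non-split simple.

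The crux, and the main obstacle, is excluding this latter exceptional possibility. If $\theta(M)$ were three-dimensional non-split simple, Theorem \ref{t:one}(i)(b) would force $\dim M = 2$, and since $0 \neq I \subseteq M$ either $I = M$ or $\dim I = 1$. The first case is impossible because $\theta(M) = \theta(I) = I^*$ is abelian, $[L^*, I^*] = 0$. In the second, $I$ has only the subalgebras $0$ and $I$, so the same must be true of $I^*$; but $[L^*, I^*] = 0$ implies $y^2 = 0$ for every $y \in I^*$, so every one-dimensional subspace of $I^*$ is a subalgebra, forcing $\dim I^* = 1$. Then $I^* \subseteq \theta(M)$ with $[\theta(M), I^*] = 0$ and, because $\theta(M)$ is Lie, $[I^*, \theta(M)] = 0$ as well, placing $I^*$ inside $Z(\theta(M)) = 0$, a contradiction.

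To upgrade from ``$\theta(M)$ is solvable'' to ``$\theta(M)$ is maximal solvable'', I would apply the same analysis to any solvable $N^* \supsetneq \theta(M)$ using $\theta^{-1}$: the corresponding $N = \theta^{-1}(N^*)$ still contains $M$ and hence $I$, so the same centre argument excludes the three-dimensional non-split simple exception, and $N$ is a solvable subalgebra of $L$ strictly containing $M$, contradicting the maximality of $M$. This yields the required bijection, and the displayed equation above completes the proof.
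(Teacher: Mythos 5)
Your argument is correct, and its skeleton is the same as the paper's: use Lemma \ref{l:int} on both sides and show that $\theta$ matches up maximal solvable subalgebras, handling non-Lie ones by Proposition \ref{p:solv} and Lie ones by the Gein--Varea theorem quoted in the introduction (\cite{10}). Where you genuinely differ is at the crux, namely excluding the exceptional case of a two-dimensional Lie maximal solvable subalgebra $M$ with $\theta(M)$ three-dimensional non-split simple. The paper kills this by a dimension count on the radical itself: since $R\subseteq U$ for every maximal solvable $U$, either $R=U$, which forces $R=L$ and Proposition \ref{p:solv} finishes, or $\dim R\leq 1$, and both $\dim R=0$ and $\dim R=1$ force $L$ to be a Lie algebra; hence $\dim R\geq 2$, every maximal solvable subalgebra has dimension at least $3$, and the exceptional case simply never occurs. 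You instead invoke Barnes' theorem (Theorem \ref{t:kernel}) to get $\theta(I)=I^*$ and observe that the nonzero subalgebra $I^*$, being annihilated on the left by $L^*$, would be a central subalgebra of the simple Lie algebra $\theta(M)$ --- a contradiction; this works, but note that Theorem \ref{t:kernel} needs $\dim L\geq 3$, which does hold in the relevant situation because a Lie $M$ cannot equal the non-Lie $L$ (and your intermediate step showing $\dim I^*=1$ is not actually needed). A small bonus of your write-up is that you are explicit about the upgrade to maximality and about surjectivity onto the maximal solvable subalgebras of $L^*$, both obtained by running the same exclusion through $\theta^{-1}$ (legitimate since $L^*$ is non-Lie by Corollary \ref{t:lie}); the paper merely asserts that $U^*$ is maximal solvable and leaves this symmetric step implicit.
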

\begin{proof} Let $U$ be a maximal solvable subalgebra of $L$. If $U$ is non-Lie then $U$ is solvable, by Proposition \ref{p:solv}. If $U$ is Lie, then $U^*$ is solvable, unless $\dim U=2$ and $U^*$ is three-dimensional non-split simple. If $R^*=0$ then $L^*$, and hence $L$, is a Lie algebra, a contradiction. Hence $\dim R^*\neq 0$. 
\par

Moreover, $R\subseteq U$. If $R=U$ then $R$ is a maximal solvable subalgebra of $L$, which is impossible unless $R=L$. But then the result follows from Proposition \ref{p:solv}. So suppose $\dim R=0,1$. The former implies that $L$ is a semisimple Lie algebra, which is impossible. The latter implies that $L=S\oplus Fa$, where $S$ is a semisimple Lie algebra. But this is also a Lie algebra and so is impossible. 
\par

It follows that $U^*$ must be a maximal solvable subalgebra of $L^*$. The result now follows from Lemma \ref{l:int}. 
\end{proof}
\medskip

A subalgebra $U$ of $L$ is called {\em lower semi-modular} in $L$ if $U\cap B$ is maximal in $B$ for every subalgebra $B$ of $L$ such that $U$ is maximal in $\langle U,B\rangle$. We say that $L$ is lower semi-modular if every subalgebra of $L$ is lower semi-modular in $L$. 

\begin{coro}  Let $L$ be a non-Lie supersolvable Leibniz algebra over a field of characteristic zero. Then $L^*$ is supersolvable.
\end{coro}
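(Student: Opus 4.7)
The strategy is to combine Proposition \ref{p:solv}, which transfers solvability, with a lattice characterisation of supersolvability via lower semi-modularity. First, by Proposition \ref{p:solv}, $L^*$ is non-Lie solvable. I would then establish the Leibniz analogue of the classical Lie-algebra fact (in characteristic zero) that supersolvability is equivalent, among solvable algebras, to lower semi-modularity. The forward direction follows from the supersolvable chief series $0 = L_0 \subset L_1 \subset \ldots \subset L_n = L$ with $1$-dimensional factors: this forces every maximal subalgebra of every subalgebra of $L$ to have codimension one, which in turn gives the covering condition required for lower semi-modularity of every subalgebra.

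Because lower semi-modularity is defined purely in terms of inclusions, joins, intersections and maximality, it is preserved by the lattice isomorphism $\theta$. Hence $L^*$ inherits lower semi-modularity from $L$.

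The main obstacle will be the converse direction: deducing supersolvability of the non-Lie solvable lower semi-modular algebra $L^*$. I would reduce to the Lie case by passing to the Lie quotient. By Theorem \ref{t:kernel}, $I^* = \theta(I)$ is the Leibniz kernel of $L^*$, so $L^*/I^*$ is a lower semi-modular Lie algebra over a field of characteristic zero, hence supersolvable by the classical result, with the exceptional three-dimensional non-split simple possibility of Theorem \ref{t:one} excluded because $L^*$ is solvable. It then remains to refine through the abelian Leibniz kernel $I^*$: lower semi-modularity of the subspaces of $I^*$ inside $L^*$ forces the right action of $L^*$ on $I^*$ to admit a flag of $L^*$-invariant one-dimensional subspaces defined over $F$, completing a supersolvable chain of ideals for $L^*$.
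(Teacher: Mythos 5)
Your overall route is the same as the paper's: show that supersolvability is equivalent (among solvable Leibniz algebras in characteristic zero) to lower semi-modularity, note that lower semi-modularity is a purely lattice-theoretic condition and hence preserved by $\theta$, transfer solvability by Proposition \ref{p:solv}, and conclude. The difference is that the paper obtains both directions of this characterisation simply by citing \cite[Proposition 5.1]{st}, whereas you set out to prove it. Your forward direction is fine (maximal subalgebras of supersolvable Leibniz algebras, and of their subalgebras, have codimension one, and the dimension count then gives lower semi-modularity), as is the observation that the condition transfers under $\theta$; incidentally, your appeal to Theorem \ref{t:kernel} is unnecessary (the Leibniz kernel of $L^*$ is intrinsic, and Theorem \ref{t:kernel} would anyway require $\dim L\geq 3$), and lower semi-modularity does pass to the quotient $L^*/I^*$ since its subalgebra lattice is an interval in ${\mathcal L}(L^*)$.

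The genuine gap is in the converse, which is exactly the crux: you assert that lower semi-modularity of $L^*$ "forces the right action of $L^*$ on $I^*$ to admit a flag of $L^*$-invariant one-dimensional subspaces defined over $F$", but no argument is given, and this is not automatic. Since the base field need not be algebraically closed, a solvable algebra can have irreducible chief factors inside the kernel of dimension at least two (for instance a one-dimensional algebra acting on a two-dimensional abelian kernel by a rotation whose characteristic polynomial is irreducible over $F$); one must actually show that such a factor produces subalgebras $U,B$ with $U$ maximal in $\langle U,B\rangle$ but $U\cap B$ not maximal in $B$ (in the rotation example, $U=Fx$, $B=I$ works, since the irreducibility of the action leaves no intermediate subalgebras between $0$ and $I$). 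Carrying this out for a general minimal ideal $A\subseteq I^*$ of dimension $\geq 2$ is precisely the non-trivial content of \cite[Proposition 5.1]{st} (and, for the Lie quotient you invoke, of Varea's theorem on lower semimodular Lie algebras), so either supply that argument or do as the paper does and cite \cite[Proposition 5.1]{st} directly, which gives the equivalence in both directions and makes the proof three lines.
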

\begin{proof} We have that $L$ is solvable and lower semi-modular, by \cite[Proposition 5.1]{st}. It follows from Proposition \ref{p:solv} that the same is true of $L^*$. Hence $L^*$ is supersolvable, by \cite[Proposition 5.1]{st} again.
\end{proof}

\section{Nilpotent Leibniz algebras}
A Lie algebra $L$ is callled {\em almost nilpotent of index $n$} if it has a basis 
\[ \{x;e_{11},\ldots,e_{1r_1};\ldots;e_{n1},\ldots,e_{nr_n}\}
\] such that
\begin{align*}
-[e_{ij},x]=[x,e_{ij}] & =e_{ij}+e_{i+1,j} \hbox{ for } 1\leq i\leq n-1, 1\leq j \leq r_i,\\
-[e_{nj},x]=[x,e_{nj}] & =e_{nj} \hbox{ and } r_j\leq r_{j+1} \hbox{ for } 1\leq j\leq n-1
\end{align*} all other products being zero.
\medskip

The following result was proved in \cite{alnilp}

\begin{theor} Let $L$ be a nilpotent Lie algebra of index $n$ and of dimension greater than two for which $L^*$ is not nilpotent, over a field of characteristic zero. Then $L^*$ is almost nilpotent of index $n$. Moreover, every almost nilpotent Lie algebra is lattice isomorphic to a nilpotent Lie algebra.
\end{theor}

For non-Lie Leibniz algebras we have the following result.

\begin{theor}\label{t:nilp} Let $L$ be a nilpotent non-Lie Leibniz algebra over a field of characteristic zero. Then $L^*$ is a non-Lie nilpotent Leibniz algebra.
\end{theor}

First we need a lemma.

\begin{lemma}\label {l:nilpmin} Let $L$ be a nilpotent Leibniz algebra and let $W=Fw$ be a minimal ideal of $L$ contained in the Leibniz kernel, $I$, of $L$. Then $W^*$ is a minimal ideal of $L^*$ and $W^*\subseteq Z(L^*)$.
\end{lemma}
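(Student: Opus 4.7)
The plan proceeds in two stages: first to locate $W^*$ as a one-dimensional subalgebra of $I^*$, and second to show that it lies in $Z(L^*)$, from which the minimality of $W^*$ as an ideal of $L^*$ will be automatic.

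For the first stage I would invoke Theorem~\ref{t:kernel} to obtain $W^* = \theta(W) \subseteq \theta(I) = I^*$ (the cases $\dim L \leq 2$ being trivial, since then $W = I$). Because $I^*$ is the Leibniz kernel of $L^*$, $[L^*,I^*]=0$ forces $[I^*,I^*]=0$, so $I^*$ is abelian; consequently every one-dimensional subspace of $I^*$ is a subalgebra, and every atom of $\mathcal{L}(L^*)$ lying in $I^*$ is one-dimensional. Since $W$ is an atom of $\mathcal{L}(L)$ (being one-dimensional), $W^*$ is an atom of $\mathcal{L}(L^*)$, so $W^* = Fw^*$ for some $w^* \in I^*$, and in particular $[L^*, w^*]=0$ --- so half of the centrality condition is already in hand.

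For the second stage I need $[w^*, y^*]=0$ for every $y^* \in L^*$. The case $y^* \in I^*$ is immediate from the abelianness of $I^*$. When $y^* \notin I^*$ and $y^{*2}=0$, $Fy^*$ is a one-dimensional subalgebra; its preimage $\theta^{-1}(Fy^*)=Fy$ is also one-dimensional, because in the nilpotent $L$ every atom of $\mathcal{L}(L)$ is one-dimensional (each cyclic subalgebra is nilpotent with a one-dimensional socle $Fx^k$). Since $W \subseteq Z(L)$, the subalgebra $W+Fy$ is two-dimensional and abelian, so it has more than two one-dimensional subalgebras; its image $W^* \cup Fy^*$ then shares this property. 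But an inspection of the two-dimensional Leibniz algebras shows that only the Lie algebras possess more than two one-dimensional subalgebras, so $W^* \cup Fy^*$ must be a Lie algebra. Combined with $[L^*, w^*]=0$, anticommutativity then forces $[w^*, y^*]=0$.

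The main obstacle is the remaining case $y^* \notin I^*$ with $y^{*2} \neq 0$, when $Fy^*$ is not itself a subalgebra. Here I would pass to the cyclic subalgebra $\langle y^* \rangle$ of $L^*$: by Theorem~\ref{t:cyclic}, $\theta^{-1}(\langle y^* \rangle) = \langle z \rangle$ is a nilpotent cyclic subalgebra of $L$. Centrality of $W$ in $L$ makes $W + \langle z \rangle$ a subalgebra, and the corresponding lattice identity transfers through $\theta$ to show that $W^* + \langle y^* \rangle$ is a subalgebra of the same length; intersecting with $I^*$ then places $[w^*, y^*]$ in $W^* + \langle y^* \rangle^2$. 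To upgrade this to $[w^*, y^*]=0$, I would compare subalgebra-by-subalgebra the structure of $W^* \cup \langle y^* \rangle$ with that of the nilpotent $W + \langle z \rangle$; any non-zero bracket $[w^*, y^*]$ would contribute a two-dimensional subalgebra of $L^*$ carrying the diamond lattice of Example~\ref{e:ex1}, whose preimage would then be a non-nilpotent diamond-type subalgebra of the nilpotent $L$, a contradiction. Once $[w^*, L^*]=0$ is established, $W^*$ is a central one-dimensional ideal of $L^*$ and is therefore minimal.
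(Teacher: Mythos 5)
Your first stage and the case $y^*\in I^*$ are fine, and your reduction (show $[w^*,y^*]=0$ for all $y^*$, the other bracket being free since $w^*\in I^*$) is in the same spirit as the paper. But the case $y^*\notin I^*$ with $(y^*)^2\neq 0$ is essentially the whole content of the lemma, and your sketch of it has two genuine defects. First, the claim that "the lattice identity transfers" to show that $W^*+\langle y^*\rangle$ is a subalgebra is not justified: being a vector-space sum is not a lattice-theoretic property, and $\theta$ only hands you the join $W^*\cup\langle y^*\rangle$ together with its maximal chain length; chain length does not bound the dimension from above unless you already know the join is nilpotent (or at least supersolvable), which is precisely what is in question. (The paper obtains the needed containment $[w^*,x^*]\in\langle x^*\rangle\cap I^*$ by a different route, arguing inside $S^*=\langle x^*,W^*\rangle$ that $\langle x^*\rangle$ is maximal, hence an ideal, or else contains $W^*$.) A smaller instance of the same issue occurs in your $(y^*)^2=0$ case, where you tacitly assume $W^*\cup Fy^*$ is two-dimensional; that one is repairable (if the join is Lie you are done by anticommutativity in any dimension, and a non-Lie join all of whose proper subalgebras are atoms must be two-dimensional cyclic, which has at most two atoms).

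Second, and more seriously, the proposed contradiction for the main case fails: a nonzero bracket $[w^*,y^*]$ does not force a two-dimensional "diamond" (non-nilpotent) subalgebra of $L^*$. For example, the three-dimensional Leibniz algebra with basis $y^*,(y^*)^2,w^*$ and only nonzero products $[y^*,y^*]=(y^*)^2$ and $[w^*,y^*]=(y^*)^2$ is nilpotent of class two, so every subalgebra of it is nilpotent and no diamond occurs, yet $[w^*,y^*]\neq 0$. Thus the obstruction cannot be the existence of a non-nilpotent subalgebra of $L^*$; it must come from a finer comparison with the particular lattice isomorphism, and this is exactly what the paper does: it writes $[w^*,x^*]=\sum_{i\geq 2}\lambda_i(x^*)^i$ and shows that $\lambda_2\neq 0$ would make $\lambda_2x^*-w^*$ generate a nilpotent cyclic subalgebra of strictly smaller dimension than its counterpart $\langle\lambda_2x-w\rangle$ in $L$, contradicting Corollary~\ref{c:cyclic} (which pins down the dimension of images of nilpotent cyclic subalgebras), and then eliminates the remaining $\lambda_i$ by the same device. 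Your "compare subalgebra-by-subalgebra" plan would have to be replaced by an argument of this dimension-counting kind; as it stands, the key step is not merely unproved but rests on a false mechanism.
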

\begin{proof}  Suppose that $x\notin I$, where $x^n=0$ but $x^{n-1}\neq 0$. Then $S=\langle x,W\rangle=\langle x\rangle+W$ and $\langle x \rangle\cap W=0$ or $1$. The former implies that $\langle x\rangle$ is a maximal subalgebra of $S$, whence $\langle x^*\rangle$ is a maximal subalgebra, and hence an ideal, of $S^*=\langle x^*, W^*\rangle$. The latter implies that $W\subseteq \langle x\rangle$, whence $W^*\subseteq \langle x^*\rangle$. In either case, $[w^*,x^*]\in \langle x^*\rangle \cap I^*$. Hence $[w^*,x^*]=\sum_{i=2}^n\lambda_i(x^*)^i$.
\par

Suppose that $\lambda_2\not = 0$ and consider $\langle \lambda_2 x-w\rangle$. If $W\subseteq \langle x \rangle$ then $W=Fx^n$ and $\langle \lambda_2 x-w\rangle=\langle x \rangle$. If $W\not \subseteq \langle x\rangle$ then $(\lambda_2 x-w)^k =\lambda_2^kx^k-\lambda_2^{k-1}\mu^{k-1}w$, where $[w,x]=\mu w$. In either case, $\langle \lambda_2 x-w\rangle$ is a cyclic subalgebra of dimension $n$.

However, $$(\lambda_2x^*-w^*)^2=\lambda_2^2(x^*)^2-\lambda_2\sum_{i=2}^n\lambda_i(x^*)^i=\lambda_2\sum_{i=3}^n\lambda_i(x^*)^i,$$ so $\langle \lambda_2x^*-w^*\rangle$ is a cyclic subalgebra of dimension $n-1$, contradicting Corollary \ref{c:cyclic}. It follows that $\lambda_2=0$. A similar argument shows that $\lambda_i=0$ for all $2\leq i\leq n$, so $[w^*,x^*]=0$. Also, $[x^*,w^*]=0$, since $w^*\in I^*$, from which the result follows.
\end{proof}
\medskip

Now we can prove Theorem \ref{t:nilp}.
\medskip

\begin{proof} We have $L/L^2$ is abelian and $L^2=\phi(L)$, so $L^*/\phi(L^*)$ is almost abelian or three-dimensional non-split simple. The latter is impossible, as it would imply that $L^*=\phi(L^*)\dot{+}S^*=S^*$, where $S^*$ is three-dimensional non-split simple, by Theorem \ref{t:levi}. But then $L$ is a two-dimensional Lie algebra, by Theorem \ref{t:lie}, a contradiction. It follows that $L^*/\phi(L^*)$, and hence $L^*$, is supersolvable (see \cite[Theorems 3.9 and 5.2]{barnes2}) and has nilradical $$N^*=\phi(L^*)+Fe_{11}^*+\dots+Fe_{1r_1}^*.$$  Let $L$ be a minimal counter-example, so $L$ is non-Lie and nilpotent, but $L^*$ is not nilpotent. 
\par

 Now $I$ is non-zero, so choose a minimal ideal $W=Fw$ of $L$ inside $I$. We have that $W^*$ is a minimal ideal of $L^*$ inside $Z(L^*)$, by Lemma \ref{l:nilpmin}. Then $L^*/W^*$ is not nilpotent, so $L/W$ is a Lie algebra and $L^*/W^*$ is almost nilpotent. Hence there is a basis
\[ \{x^*;e_{11}^*,\ldots,e_{1r_1}^*;\ldots;e_{n1}^*,\ldots,e_{nr_n}^*, w^*\} \hbox{ for } L^*
\] such that
\begin{align*}
[x^*,e_{ij}^*] & =e_{ij}^*+e_{i+1,j}^*+\lambda_{ij}w^* \hbox{ for } 1\leq i\leq n-1, 1\leq j \leq r_i,\\
[x^*,e_{nj}^*] & =e_{nj}^*+\lambda_{nj}w^* \hbox{ and } r_j\leq r_{j+1} \hbox{ for } 1\leq j\leq n-1, 
\end{align*}  where $\lambda_{ij}\in F$, $I^*=Fw^*$ and $(N^*)^2\subseteq W^*$. Let $M^*$ be spanned by all of the basis vectors for $L^*$ apart from $e_{11}^*$. Then $M^*$ is not nilpotent and has nilradical $F^*$ spanned by all of the basis vectors apart from $e_{11}^*$ and $x^*$. By the minimality, we must have that $M$ is Lie and $M^*$ is almost nilpotent, so $(F^*)^2=0$, $(x^*)^2=0$ and $[e_{ij}^*,x^*]=-[x^*,e_{ij}^*]$ for all of the $e_{ij}^*$'s apart from $e_{11}^*$. But also $[e_{11}^*,x^*] = -e_{11}^*-e_{21}^*+\mu w^*$ for some $\mu \in F$, so
\begin{align*}
[e_{11}^*,x^*] & =[[x^*,e_{11}^*],x^*]-[e_{21}^*,x^*] \\
 & =[x^*,[e_{11}^*,x^*]]+[(x^*)^2,e_{11}]+[x^*,e_{21}^*] \\
 & =-[x^*,e_{11}^*]-[x^*,e_{21}^*]+[x^*,e_{21}^*]=-[x^*,e_{11}^*]
\end{align*}
\par

We now claim that $(N^*)^2=0$. It suffices to show that $[N^*,e_{11}^*]=0$, which we do by a backwards induction argument. We have, for any $f^*\in F^*$,
\begin{align} [f^*,e_{11}^*] & = [f^*,[x^*,e_{11}^*]-e_{21}^*-\lambda_{11}w^*]=[f^*,[x^*,e_{11}^*]] \nonumber \\
 & = [[f^*,x^*],e_{11}^*]-[[f^*,e_{11}^*],x^*]= [[f^*,x^*],e_{11}^*],
\end{align} since $[f^*,e_{11}^*]\in W\subseteq Z(L^*)$. Now putting $f^*=e_{nj}^*$ gives
\[ [e_{nj}^*,e_{11}^*]=[[e_{nj}^*,x^*],e_{11}^*]=-[e_{nj}^*,e_{11}^*],
\] whence $[e_{nj}^*,e_{11}^*]=0$. So now suppose that $[e_{ij}^*,e_{11}^*]=0$ for some $2\leq i\leq n$.

Putting $f^*=e_{i-1,j}^*$ ($(i-1,j)\neq (2,1)$) in (1) gives
\[ [e_{i-1,j}^*,e_{11}^*]=[[e_{i-1,j}^*,x^*].e_{11}^*]=-[e_{i-1,j}^*,e_{11}^*]
\] which, again, yields that $[e_{i-1,j}^*,e_{11}^*]=0$. Finally, note that, if we now put $f^*=e_{11}^*$, then (1) remains valid, so $(e_{11}^*)^2=0$
and $(N^*)^2=0$.
\par

Now replace $e_{nj}^*$ by $e_{nj}^*+\lambda_{nj}w^*$, $e_{ij}^*$ by $e_{ij}^*+(-1)^{n-i}\lambda_{i+1,j}w^*$ to see that $L^*$ is almost nilpotent and $L$ is a Lie algebra, a contradiction. Hence the result holds.
\end{proof}

\end{document}